\newcommand{\IO}{{\mathrel{\textbf{O}\kern-0.55em\raisebox{0.2ex}{\scalebox{0.6}{\textbf{i}}}}\kern0.4em}}
\newcommand{\IP}{{\mathrel{\textbf{P}\kern-0.5em\raisebox{0.2ex}{\scalebox{0.6}{\textbf{i}}}}\kern0.4em}}
\renewcommand{\S}{\mathcal{S}}
\newcommand{\Sig}{\mathcal{S}}
\newcommand{\NV}{\mathcal{V}}
\newcommand{\XV}{\mathcal{U}}
\newcommand{\M}{\mathcal{M}}
\newcommand{\F}{\mathcal{F}}
\newcommand{\X}{\overrightarrow{X}}
\newcommand{\Y}{\overrightarrow{Y}}
\newcommand{\x}{\overrightarrow{x}}
\newcommand{\y}{\overrightarrow{y}}
\newcommand{\U}{\overrightarrow{U}} 
\renewcommand{\u}{\overrightarrow{u}}
\newcommand{\zi}{\overrightarrow{z}}
\newcommand{\Z}{\overrightarrow{Z}}
\newcommand{\A}{\mathcal{A}}
\newcommand{\R}{\mathcal{R}}
\newcommand{\tuple}[1]{\langle #1 \rangle}
\renewcommand{\int}[1]{#1\boxright}
\newcommand{\syndirparof}{\rightsquigarrow}
\newtheorem{Def}{Definition}
\begin{document}
\title{A logic for instrumental obligation}
%
%\titlerunning{Abbreviated paper title}
% If the paper title is too long for the running head, you can set
% an abbreviated paper title here
%
\author{Jialiang Yan\inst{1} \and
Qingyu He\inst{2}}
%Third Author\inst{3}\orcidID{2222--3333-4444-5555}}
%
\authorrunning{Yan and He}
% First names are abbreviated in the running head.
% If there are more than two authors, 'et al.' is used.
%
\institute{Tsinghua University, China \\
\email{jialiang.yann@gmail.com}\\
\and  Tsinghua University, China
\\ 
\email{qingyuhethu@gmail.com}}
\maketitle              % typeset the header of the contribution
\begin{abstract}
This paper develops a logic based on causal inferences to formally capture the concept of instrumental obligation. We establish a causal deontic model that extends causal models with priority structures, allowing us to represent both the instrumental and deontic aspects of an obligation. In this framework, instrumental obligation is defined as a derived notion through intervention formulas of causal reasoning, where an action is considered obligatory if it is the best way to achieve the goal. We provide a sound and complete axiomatic system and show that the logic is NP-complete. The concept of instrumental permission is also taken  into account in the model.

\keywords{Instrumental obligation  \and Causal reasoning \and Priority structure}
\end{abstract}
\section{Introduction}\label{intro}

\textit{Instrumental obligation} means that obligatory actions result in beneficial outcomes for the agent. In this context, the right actions are those that help achieve the agent's desired goals or plans. 
%In natural language, this notion can be expressed as: 
%``Assuming action $X$ leads to outcome $Y$, and $Y$ is the desired end for agent $\alpha$, then it is obligatory for $\alpha$ to perform $X$." 

In moral philosophy and meta-ethics, instrumental obligation is often discussed in the context of \textit{hypothetical imperatives} (e.g. \cite{Hill1973-EHITHI,foot1972morality,schroeder2005hypothetical}), a key concept in Kantian ethics. Hypothetical imperatives involve taking the necessary means to achieve a certain end, applicable to agents who hold that end. Thus the obligation is understood as an instrumental relation between the action and the agent. In addition, in decision-making theories, instrumental obligation is closely linked to the idea of instrumental responsibility (see \cite{spitzeck2013normative,hof2017instrumental}), which suggests that responsible actions, particularly in business, are ultimately profitable.

Deontic logicians have also explored instrumental obligation by modeling the underlying reasons for obligations. For instance, in \cite{van2014priority}, the authors discuss the reasons behind obligations using priority graph structures in preference logic (see \cite{liu2011reasoning,liu2011two} among others). \cite{Giordani2021} introduces a system of deontic logic that grounds obligations in reasons, providing a framework that integrates both standard and neighborhood semantics to model basic deontic reasoning.

In this paper, we aim to develop a logic that models instrumental obligation and the reasoning about it. Our understanding of instrumental obligation follows the perspective put forth by Stephen Finlay  (\cite{finlay2009oughts,finlay2016oughts}). Finlay argues that, in natural language, instrumental obligation differs from normal conditional obligation. So the semantics for conditional sentences is not enough to account for the instrumental reading of the sentences. This distinction emphasizes the complexity and challenges in providing a reductive analysis of instrumental obligation, suggesting the need for a distinct framework to understand it (for further philosophical discussion, see \cite{schroeder2004scope,schroeder2005hypothetical,setiya2007cognitivism,finlay2009against}). Consider the normal conditional sentence in \Next.

\ex. \label{condi1} $\#$ If Max is trying to evade arrest, he ought to mingle with the crowd. \hfill{(\cite{finlay2009oughts})}

This sentence suggests that, given the condition of evading arrest, Max ought to act in a certain way. However, the instrumental use of obligation just means that if Max performs the action, he will achieve the goal of evading arrest. To clarify this distinction, Finlay proposed that all instrumental ought-propositions should be expressed using an "in order that..." modifier, which makes the purpose of the action explicit.

\ex. \label{condi2} In order to evade arrest, Max ought to mingle with the crowd. \hfill{(\cite{finlay2009oughts})}

Building on Finlay's analysis, we propose a logic that uses causal inferences to formally represent both goals and obligations. We argue that the relationship between a goal and an action can be captured through causal relationships, asserting that an action is obligatory if it is the best way to achieve the goal. We then established a causal deontic model and develop an axiomatic system. We show that the logic is both sound and complete and explore its complexity.

The paper is structured as follows: In Section \ref{SFIO}, we discuss the semantic foundations of instrumental obligation. Section \ref{prelimaries} introduces the necessary preliminaries on causality and ordering. In Section \ref{CDM}, we establish the causal deontic model, and in Section \ref{IOI}, we define the instrumental obligation modality. The logic is developed and its properties are proved in Section \ref{CIO}. Finally, we conclude in Section \ref{conclu}.

\section{Semantic foundation of instrumental obligation}\label{SFIO}

\subsection{Instrumental relation as a causal relation}

Instrumental obligation, typically expressed as ``In order to..., you ought to...", might initially appear to be a form of conditional obligation, as the antecedents reflect agents' goals or plans, while the consequents specify the actions they ought to take. However, as discussed in Section \ref{intro}, instrumental meaning of obligations are not genuinely conditional obligations, and the semantics of conditional sentences are insufficient to account for them.
If we treat instrumental obligation merely as conditional propositions it will raise concerns about the moral meaning of obligation. For instance, consider the sentence \ref{condi1}, repeated here as \ref{condi1'}:

\ex. \label{condi1'} $\#$ If Max is trying to evade arrest, he ought to mingle with the crowd.

In this case, claiming that Max is obligated to mingle with the crowd, given his goal of evading arrest, seems illogical because the action is not moral. The modal ``ought" in such a conditional statement means a specific action under certain conditions, but it fails to capture the true instrumental meaning of obligation—namely, executing the action to achieve a particular goal. This issue, known as the detachment problem, is also discussed in \cite{finlay2009oughts}. 

So, what is the instrumental meaning of obligation? In conditional sentences, the relationship between the goal and the obligatory action implies that the action is conditional on the goal. However, the instrumental meaning suggests that the action leads to the goal. It asserts that what you ought to do is aimed at achieving the goal. 

To represent this relationship, we propose interpreting the instrumental relation as a causal relation. The goal in the antecedent is understood as the outcome of executing the action. 

When we use conditional forms to describe actions, we are explaining the reasons for the action, essentially the conditions provide the reasons for why the action should be performed. This philosophical view can be traced back to Davidson (\cite{Davidson1963}). However, instrumental obligation statements serve a different purpose. They indicate the destination of doing the action rather than the reasons behind it. The action is executed \textit{for} a specific destination, not \textit{because of} certain conditions. In this sense, causality plays a role in linking the action to its goal. Therefore the instrumental meaning can be expressed by causality.

\subsection{Ordering source of deontic modals}

In standard deontic logic, it is common to model obligation, permission, and prohibition using an \textit{ideal ordering} system (see \cite{Hansson1969}), which ranks possible worlds or actions based on how well they satisfy moral requirements. Similarly, in formal semantics, Kratzer-style semantics proposes that the interpretation of deontic modals should incorporate an ordering source as a second conversational background (see \cite{Kratzer1991}).

When discussing instrumental obligation, as mentioned, it is essential to consider both the obligation itself and the goal it seeks to achieve. As noted by \cite{finlay2016oughts}, a key question arises: how does the goal impact the ordering source of obligation?

In this paper, we argue that the goal does not change the overall ordering but instead constrains a subset of the ordering when interpreting instrumental obligation.

We address this issue by applying the concept of priority structures used in preference reasoning \cite{liu2011reasoning,liu2011two}. As explained in \cite{van2014priority}, a priority structure offers an explanation for the ideal ordering, clarifying why possible worlds are ranked in a particular way. The priority structure reveals the reasons behind the ranking and generates the ordering accordingly.

The goal reflects the agent's current preferences, i.e., what they prefer and aim to achieve. Therefore, the goal should be formally represented by a preference ordering. However, the priority structure that generates this preference ordering may differ from the priority structure behind the ideal ordering, as one reflects \textit{personal} preferences and the other reflects \textit{objective} moral or normative principles. Thus, the interpretation of goals operates in an independent ordering system and does not influence the ideal ordering. In other words, an agent's goal does not affect the underlying reasons (i.e., the priority) for ranking ethical situations and, therefore, does not change the ideal ordering.

However, the goal will restrict the ordering used to interpret instrumental obligation. In a deontic model with an ideal ordering, obligations are interpreted in the most ideal part of the ordering. However, when interpreting instrumental obligation, we need consider the goal, the outcome of the action. As a result, formally, the obligation must be interpreted in worlds that are consistent with achieving the goal. This adds an additional restriction: the obligation is interpreted in the most ideal worlds that also are consistent with the agent's goal. In this way, the goal restricts the ordering source.

In this section, we discussed two semantic elements of instrumental obligation. First, causality explains the relationship between the obligatory action and the goal. Second, the ordering source interprets obligation in a deontic sense. These two elements provide the foundation for the semantics of instrumental obligation.

In the next section, we will introduce the formal mechanisms for each of these elements, which will serve as the basis for establishing the causal deontic models.

\section{Preliminaries: causal reasoning and priority structures}\label{prelimaries}

To formalize instrumental obligation and its reasoning based on causal information, we build on research in causal reasoning and deontic logic with priority structures. In this section, we outline the necessary preliminaries.

\subsection{Causal models}

We introduce structural equation models, as developed in \cite{Pearl95,halpern2000axiomatizing}, to represent causal inferences. Intuitively, a causal structure consists of two components: the causal variables and the causal relationships among them. Formally, these variables are defined by a signature $\mathcal{S}=(\mathcal{U},\mathcal{V},\Sigma)$, where $\mathcal{U}$ is a finite set of exogenous variables,\footnote{Causal models with infinite variables are explored in \cite{IbelingI19}.} $\mathcal{V}$ is a finite set of endogenous variables, and $\Sigma$ is the range of the variables.

In a structural equation model, causal influence among the variables is represented by a set of structural functions $\F$. For each endogenous variable $X$, $\F$ contains a function $f_X$, which determines the value of $X$ based on the other variables. Formally, a causal model is defined as a tuple $\langle \mathcal{S}, \mathcal{F}, \A \rangle$, where $\mathcal{S}=(\mathcal{U}, \mathcal{V}, \Sigma)$ is the signature, and $\mathcal{F}$ is a collection of functions $\{f_X\}_{X \in \mathcal{V}}$ with $f_X: ((\mathcal{U} \cup \mathcal{V}) \backslash \{X\}) \rightarrow \Sigma$. The function $f_X$ is called the structural equation for $X$, $\A$ is an assignment of values to the variables characterizing the actual state. Besides, let $W^\F$ denote the set of all assignments complied with structural functions.

In many studies of structural equation models, a causal model is often assumed to be \textit{acyclic} or \textit{recursive}, meaning the causal influence represented by $\F$ is acyclic\footnote{This means there is no sequence $X_1, ..., X_n$ such that for each $0 < k < n$, the value of $X_{k+1}$ depends on $X_k$ according to $\F$, and $X_1$ also depends on $X_n$.}. 

The basic language $\mathcal{L}$ of the causal model for causal reasoning is defined as follows:

\begin{Def}[Language $\mathcal{L}$] Let $\Sig=(\XV,\NV,\R)$,  formulas $\phi$ of the language $\mathcal{L}( \Sig)$ are given by

\begin{center}
     \[ \phi ::= X{=}x \mid \lnot \phi \mid \phi \land \phi \mid[\overrightarrow{V}=\overrightarrow{v}]\phi \]
\end{center}

where $X\in\XV\cup\NV$, $x\in\R$ and $\overrightarrow{V}=\overrightarrow{v}$ is a sequence of the form $V_1=v_1,...,V_n=v_n$ and  $\overrightarrow{V}\in\NV$. 
\end{Def}

  The atom $X{=}x$ means the value of the variable $X$ is $x$. For convenience, we will write both  $V_1=v_1,...,V_n=v_n$ and  $V_1=v_1\wedge...\wedge V_n=v_n$ as $\overrightarrow{V}=\overrightarrow{v}$. We call formulas of the form $X=x$ as atoms and let $\mathsf{At}$ denote the set of all atoms. 

 The formula $[\overrightarrow{V}=\overrightarrow{v}]\phi$ is an intervention formula that formalizes causal relationships through the concept of \textit{intervention}. An intervention is a hypothetical change in the actual state, along with the causal rules, that forces the value of certain endogenous variables to change. The result of an intervention is defined as follows.
\begin{Def}[Intervention]\label{Def: classical intervention}
Let $\langle \mathcal{S}, \mathcal{F}, \A \rangle$ be a causal model.
The causal model results from an intervention forcing the value of $\overrightarrow{X}$ to be $\overrightarrow{x}$ is defined as $\tuple{\S,\F_{\overrightarrow{X}=\overrightarrow{x}}, \A^\F_{\overrightarrow{X}=\overrightarrow{x}}}$ where:

\begin{itemize}
    \item the functions in $\F_{\overrightarrow{X}=\overrightarrow{x}} = \{ f'_{V} \mid V \in \NV \}$ are such that:  (i) for each $V$ \textbf{not in $\overrightarrow{X}$}, the function $f'_{V}$ is exactly as $f_{V}$, and (ii) for each $V {=} X_i \in \overrightarrow{X}$, the function $f'_{X_i}$ is a \emph{constant} function returning the value $x_i \in \overrightarrow{x}$ regardless of the values of all other variables.

    \item   $\A^\F_{\overrightarrow{X}=\overrightarrow{x}}$ is the unique\footnote{Since $\F$ is acyclic, $\F_{\overrightarrow{X}=\overrightarrow{x}}$ is also acyclic. Thus $\F_{\overrightarrow{X}=\overrightarrow{x}}$ has a unique solution with respect to each setting of exogenous variables.} assignment to $\F_{\overrightarrow{X}=\overrightarrow{x}}$ whose assignment to exogenous variables is identical with $\A$. Formally, $\A^\F_{\overrightarrow{X}=\overrightarrow{x}}(Y)
      $ is the unique assignment that satisfies the following equations:
    \[
     \A^\F_{\overrightarrow{X}=\overrightarrow{x}}(Y)
      %:
      =
      \left\{
        \begin{array}{ll}
          \A(Y)                                                            & \text{if}\; Y \in \XV \\
          f'_Y((\A^{\F}_{\overrightarrow{X}=\overrightarrow{x}})^{-Y}) & \text{if}\; Y \in \NV \\
        \end{array}
      \right.
    \]
   \end{itemize}     
 Note that $(\A)^{-X}$ denotes the sub-assignment of $\A$ to $(\XV\cup\NV) \backslash \{X\}$.
\end{Def}

Building on the concept of intervention, we can define the semantic clauses. The semantics for atomic and intervention formulas in the causal model $\M = \langle \Sig, \mathcal{F}, \mathcal{A} \rangle$ as outlined below. The Boolean connectives are defined in the standard manner.

\begin{itemize}
    \item  $ \tuple{\Sig, \F, \A}\models X=x$ iff $\A(X)=x$.
 
    \item $\tuple{\Sig, \F, \A}\models [\overrightarrow{X}=\overrightarrow{x}]\phi$ iff $\tuple{\Sig, \F_{\overrightarrow{X}=\overrightarrow{x}}, \A^\F_{\overrightarrow{X}=\overrightarrow{x}}}\models \phi$.
    
\end{itemize}

\subsection{Deontic models with priority}

In the preference logic developed by \cite{liu2008changing,liu2011two,liu2011reasoning}, priority structures formally represent the reasons underlying a preference ordering. In these structures propositions are ranked according to their importance, with the propositions themselves representing the reasons. From this priority structure, a preference ordering is generated, which reflects preferences among objects.

In deontic logic, the interpretation of obligation is based on an ideal ordering. Extending deontic models with priority structures, as seen in \cite{van2014priority}, provides a formal method to explain the generation of this ideal ordering. This process is often formalized through the concept of a \textit{P-graph} which defines a priority structure.

\begin{Def}[P-graph \cite{van2014priority}]  A $P$-graph is a tuple $G=\tuple{\Phi, \prec}$ such that

\begin{itemize}
    \item \( \Phi \) is a set of propositions or normative statements.
    \item \( \prec \) is a strict ordering on \( \Phi \), indicating that \( \psi \prec \phi \) means that proposition \( \psi \) is of lower priority than \( \phi \).
\end{itemize}
    
\end{Def}

Each proposition in a P-graph implies the propositions that follow it in the ordering. For example, if \( \phi_1 \prec \phi_2 \), then \( \phi_2 \) holds whenever \( \phi_1 \) holds. This reflects the idea that the highest-priority must be satisfied first, while lower-priority ones are secondary.

%To derive the ideal ordering on possible worlds from a P-graph, we consider the truth sets of the propositions.  Let $S$ be non-empty set of worlds, and \( I: P \rightarrow 2^S \) be a valuation function mapping each proposition to a set of worlds where it holds. 

In addition, a betterness relation based on the priority structure can be defined:

\begin{Def}[Betterness from P-graphs \cite{van2014priority}]~ Let $G=\tuple{\Phi, \prec}$ be a P-graph, \( S \) be a non-empty set of possible worlds, the betterness relation $\leq_G\subseteq S^2$ is defined as:

\begin{center}
  
$s\preceq_G s' $ iff for any $\phi\in \Phi$, $s \models \phi \Rightarrow s' \models \phi$

\end{center}

\end{Def}

This means that a world \( s \) is at least as good as state \( s' \) if, for every normative proposition \( \phi \) in the priority structure, whenever \( s \) satisfies \( \phi \), so does \( s' \). The set of maximal elements in this ordering represents the ideal worlds under the given priority structure. By comparing the satisfaction of propositions in \( \Phi \) across different possible worlds, we can determine which worlds are more ideal. 

\section{Causal deontic models}\label{CDM}

In the previous sections, we explored causality and ordering as the two significant semantic foundations of instrumental obligation. Causality captures the instrumental meaning, while ordering provides the basis for interpreting obligation. In this section, we establish a model to formalize instrumental obligation by integrating these two foundations, combining causal models with deontic model having priority structures.

The basic idea is to extend the causal model by a priority structure, as developed by \cite{xie2024logic} for analyzing desire. In our approach, we employ this method by adding a P-graph into the causal model, from which an ideal ordering is derived. Unlike \cite{xie2024logic}, where the P-graph is defined over variables, our model defines the P-graph among causal atoms to better capture the priority of normative propositions.

\begin{Def}[Priority ordering among atoms] 
Let $\mathsf{At}$ be the set of atomic formulas. A priority ordering $P$ over a subset of variables $\Phi \subseteq \mathsf{At}$ is a sequence $\langle \Phi, \ll \rangle$. The notation $Y=y \ll X=x$ indicates that $X=x$ is strictly more important than $Y=y$. 
\end{Def}

From this priority ordering, we can induce an ideal ordering among assignments.

\begin{Def}[Ideal ordering from $P$]\label{def:ideal ordering fromP}
Given a priority ordering $P$ and a causal model $\M$, the ideal ordering $\leq_P \subseteq W^\F \times W^\F$, induced by $P$ and $\M_D$, is defined as follows: for any $\A,\A' \in W^\F$, $\A \leq_P \A'$ whenever: 
\begin{center}
For any $X=x \in \Psi$: $\A(X)=x \Rightarrow \A'(X)=x$, or there exists $Y=y \in \Psi$ such that $\A(Y)\neq y$, $\A'(Y)=y$, and for all $Z=z \in \Psi$, $\A(Z)= z$ and $\A'(Z) \neq z$ implies $Z=z \ll Y=y$.
\end{center}
\end{Def}

An assignment $\A$ is considered a best world if there is no $\A'$ such that $\A \leq_P \A'$ and $\A \neq \A'$. Additionally, $\A < \A'$ if $\A \leq_P \A'$ and $\A' \not\leq_P \A$. In fact, each assignment $\A$ can be seen as a possible distribution of values for the variables (except for the fact that we require them to comply with the structural equations). In this sense, each assignment can be regard as a possible world: they are all descriptions of possible states of the world (the values).

A causal deontic model can then be defined by including the priority ordering $P$ into the causal model.

\begin{Def}[Causal deontic models]
A causal deontic model is a tuple $\M_D = \langle \S, \F, P, \A \rangle$, where $\S$, $\F$, and $P$ are defined as above. The \textit{ideal ordering} $\leq_P$ is induced by $P$, and $\A \in W^\F$, where $W^\F$ is the set of assignments that comply with $\F$.
\end{Def}

The result of an intervention for a causal deontic model can be defined in the original way, as Def \ref{Def: classical intervention}. In other words, the extension with priority ordering does not affect the intervention process. Formally, the ideal ordering $\leq^{\X=\x}_P$ in the model after an intervention by $\X=\x$ is defined as follows: $\A_1^{\X=\x} \leq^{\X=\x}_P \A_2^{\X=\x}$ whenever $\A_1 \leq_P \A_2$.

Let us now consider an example to illustrate how this model functions. 

\begin{quote}
    \textit{John is overweight. The doctor advise him to lose weight. Then there are two options for him: one is doing more sports, the other one is taking some weight-loss pills. But the pills would bring some side-effects. So, in order to lose weight, John ought to do more sports.}
\end{quote}

Let the letters $A$, $B$, $C$, and $D$ represent the events ``doing sports", ``taking pills", ``losing weight" and ``experiencing side effects," respectively. When an event occurs, it is assigned the value 1, otherwise it is assigned the value 0. For example, ``$A=1$" indicates that doing sports is taking place. We can define the following structural functions to reflect the causal relationships between these events.

\noindent
\begin{minipage}[b]{0.6\textwidth}
    \centering
    \begin{tabular}{|l l | l |}
    \hline
    $f_A$: & $f_A(\A^{-A})=1$ iff $\A(U_A)=1$;   \\
    $f_B$: & $f_B(\A^{-B})=1$ iff $\A(U_B)=1$;    \\
    $f_C$: & $f_C(\A^{-C})=1$ iff $\A(A)=1$,    \\ 
     &  or  $\A(B)=1$;  \\ 
    $f_D$: & $f_D(\A^{-D})=1$ iff $\A(B)=1$ \\
    \hline
    \end{tabular}
    \vspace{1.4em}
    \par\vspace{5pt}  % 控制表格与标题的间距
    \textbf{Structure functions}  % 手动添加表格标题
\end{minipage}%
\hspace{0.05\textwidth}  % 控制表格和图之间的水平间距
\begin{minipage}[b]{0.35\textwidth}
    \centering
    \begin{tikzpicture}[->,>=stealth',shorten >=1pt,auto,node distance=2cm,
                    semithick]
        \tikzstyle{state}=[circle,draw]
        \node    (A)    {$A=1$}; % 定义 A 节点
        \node     (B)   [right=4em of A]    {$B=0$}; % 将 B 放在 A 右边
        \node   (UA)   [above=2em of A]   {$U_A=1$};
        \node   (UB)   [above=2em of B]   {$U_B=0$ };
        \node  (C)   [below left=2em of B]   {$C=1$};
        \node  (D)   [below =2em of B]   {$D=0$};
        
        \path
        (B) edge              node {} (C)
        (UB) edge              node {} (B) 
        (UA) edge              node {} (A)
        (B) edge              node {} (D)
        (A) edge              node {} (C);
    \end{tikzpicture}
    \vspace{-0.5em}
    \par\vspace{5pt}% 控制图形与标题的间距
    \textbf{Causal relationships}  % 手动添加图形标题
\end{minipage}

\vspace{0.7em}

From the table and figure, we observe that the values of both $A$ and $B$ are determined by exogenous variables. Additionally, both $A=1$ and $B=1$ can lead to $C=1$, while $B=1$ will bring out $D=1$.

In this example, we assume that experiencing side effects is the worst outcome, followed by not losing weight. Losing weight is a better outcome, and the best scenario is losing weight without any side effects. Thus, we can have the following priority ordering:

\begin{center}
  $D=1\ll C=0 \ll C=1 \ll D=0$   
\end{center}

Based on the definition of the induced ideal ordering, we can rank all possible assignments to the variables. We present a fragment of this ordering as follows:

\begin{table}[]
    \centering
    \renewcommand{\arraystretch}{1} % 调整行高
    \begin{tabular}{|>{\centering\arraybackslash}p{1cm}|>{\centering\arraybackslash}p{2cm}|>{\centering\arraybackslash}p{2cm}|>{\centering\arraybackslash}p{2cm}|>{\centering\arraybackslash}p{2cm}|}
    \hline
    & $A$ & $B$ & $C$ & $D$ \\
    \hline
    $\A_1$ & 0  & 0 & 1 & 0 \\
    \hline
    $\A_2$ & 1 & 0 & 1 & 0 \\
    \hline
    $\A_3$ & 0 & 0 & 0 & 0 \\
    \hline
    $\A_4$ & 1 & 1 & 1 & 1 \\
    \hline
    $\A_5$ & 0 & 1 & 1 & 1 \\
    \hline
    \end{tabular}
    \\
    \vspace{0.5em}
    $Min_\leq^P= \{A_4,\A_5\}$, $Max_\leq^P= \{A_1,\A_2\}$
    \vspace{0.3em}
    \caption{Ideal ordering of the assignments}
    \label{ioass}
\end{table}

Table \ref{ioass} shows that the actual state is $\A_3$, where nothing occurs before behaving. Both $\A_1$ and $\A_2$ represent better states than the current one, as they involve weight loss without side effects. In contrast, $\A_4$ and $\A_5$ are worse than the current state due to the presence of side effects. Although there are more worse assignments where only side effects occur, they are not shown in the table.

The instrumental obligation can be expressed by the ordering in the model. Intuitively the idea is that the goal, such as ``no side effects" and ``weight loss," is represented in the priority ordering. We then evaluate the action by making an intervention on the actual assignment and checking whether the intervention leads to an assignment where the goal is achieved, and if the result is the best possible assignment after the intervention. If so, it will indicate that the action ought to be taken.

This approach captures the instrumental meaning in our model by defining obligation through intervention. In the following section, we will formally define the modality of instrumental obligation.

\section{Instrumental obligation as intervention} \label{IOI}

In this section, we define the modality of instrumental obligation. As discussed in Section \ref{SFIO}, the instrumental meaning is grounded in causal relationships. In causal models, intervention formulas directly represent these causal connections. Therefore, we aim to reduce the instrumental obligation operator to intervention formulas.

The basic language $\mathcal{L}_D$ of a causal deontic model $\mathcal{M}_D$ extends the basic causal reasoning language.

\begin{Def}[Language $\mathcal{L}_D$]
Let $\Sigma = (\mathcal{U}, \mathcal{V}, \mathcal{R})$. The formulas $\phi$ of the language $\mathcal{L}_D$ are defined as follows:
\[
\phi ::= X=x \mid \lnot \phi \mid \phi \land \phi \mid [\overrightarrow{V}=\overrightarrow{v}]\phi \mid X=x \prec Y=y \mid \phi^{\u}
\]
where $X \in \mathcal{U} \cup \mathcal{V}$, $x \in \mathcal{R}$, and $\overrightarrow{V} = \overrightarrow{v}$ is a sequence of assignments of the form $V_1 = v_1, \dots, V_n = v_n$ with $\overrightarrow{V} \in \mathcal{V}$ and $\u \subseteq \mathcal{R}(\mathcal{U})$.
\end{Def}

We extend the basic causal language by introducing the ``priority" formula $X=x \prec Y=y$ and the formula $\phi^{\u}$. The ``priority" formula means that $Y=y$ has higher priority than $X=x$. The formula $\phi^{\u}$ is also novel which states that $\phi$ is true under the assignment where $\mathcal{U} = \u$. Since $\mathcal{U}$ represents exogenous variables, each $\u$ determines a unique assignment when the structural functions are recursive.

\begin{Def}[Truth conditions for $\mathcal{L}_D$]
Let $\mathcal{M}_D = \langle \Sig, \mathcal{F}, P, \A \rangle$ be a causal deontic model. The truth conditions for the new operators are as follows:
\begin{itemize}
    \item $\mathcal{M}_D \models X=x \prec Y=y$ iff $X=x \ll Y=y$
    \item $\mathcal{M}_D\models \phi^{\u}$ iff $\A' \models \phi$ where $\A'(\mathcal{U}) = \u$
\end{itemize}
\end{Def}

Both operators are global. It is worth noting that, ``$\u$" acts as a nominal in the model, similar to \textit{Hybrid Logic} (see \cite{BlackburnSeligman1995}). Since the causal models we discuss are recursive, the values of all exogenous variables $\mathcal{U}$ (denoted by $\U$) fully determine the assignment. Each assignment can thus be uniquely labeled by a specific $\u$. Furthermore, we can express the ordering between assignments as:
\[
\U = \u \leq \U = \u'
\]
This formula indicates that the assignment with $\U = \u'$ is at least as good as the assignment with $\U = \u$. Based on our definition of the induced ideal ordering, this can be formally captured in the language as follows:
\begin{align}
\U = \u &\leq \U = \u' :=  \bigwedge\limits_{X\in \mathcal{U}\cup \mathcal{V}, x\in \mathcal{R}(X)} 
(X = x^{\u} \wedge X = x^{\u'}) \nonumber \\
&\vee \bigvee\limits_{Y\in \mathcal{U}\cup \mathcal{V}, y \in \mathcal{R}(Y)} 
\left( (Y = y^{\u'} \wedge \neg Y = y^{\u}) \wedge \right. \nonumber \\
&\left. \bigwedge\limits_{Z\in \mathcal{U}\cup \mathcal{V}, z\in \mathcal{R}(X)} 
(Z = z^{\u} \wedge \neg Z = z^{\u'} \rightarrow Z = z \prec Y = y) \right)
\end{align}
The formula (1) characterizes the conditions for $\U = \u' \leq \U = \u $ to hold: let $\A,\A'$ denote the assignments with $\A(\U)=\u$ and $\A'(\U)=\u'$ respectively, then $\A' \leq \A$ iff (i) $\A(X)=\A'(X)$ for all $X\in \XV\cup \NV$; or (ii) if there is $Y\in \XV\cup \NV$ such $\A'(Y)=y\neq \A'(Y)$, then for any $Z\in \XV\cup \NV$ with $\A(Z)=z\neq \A'(Z)$, $Z=z$ is strictly less important than $Y=y$. The expression matches our definition of ideal ordering from $P$ in Def. \ref{def:ideal ordering fromP}. 

We then extend this to operates interventions:

\begin{align}
[\Y=\y]\U=\u &\leq [\Z=\zi]\U=\u' := \bigwedge\limits_{X\in \mathcal{U}\cup \mathcal{V}, x\in \mathcal{R}(X)} 
\left( ([\Y=\y]X=x)^{\u} \wedge ([\Z=\zi]X=x)^{\u'} \right) \nonumber \\
&\vee \bigvee\limits_{W\in \mathcal{U}\cup \mathcal{V}, w \in \mathcal{R}(W)} 
\left( \left( ([\Z=\zi]W=w)^{\u'} \wedge \neg ([\Y=\y]W=w)^{\u} \right) \wedge \right. \nonumber \\
&\left. \bigwedge\limits_{E\in \mathcal{U}\cup \mathcal{V}, e \in \mathcal{R}(E)} 
\left( ([\Y=\y]E=e)^{\u} \wedge (\neg [\Z=\zi]E=e)^{\u'} \rightarrow E=e \prec W=w \right) \right)
\end{align}

This expresses the relative ordering of assignments after an intervention, including the changes to the relevant variables. Formula (2) follows the idea of Formula (1), except that we use the intervention operator to indicate the worlds after intervening.

Based on these definitions, the instrumental obligation modality, denoted as $\IO$, can be defined as a derived operator in the language $\mathcal{L}_D$:
\begin{align}
\IO(X=x : \Y=\y)^{\u} := &\ (\neg X=x)^{\u} \wedge ([\Y=\y]X=x)^{\u} \nonumber \\
&\wedge \bigwedge\limits_{\Z \subseteq \mathcal{V}, \zi \subseteq \mathcal{R}(\Z)} 
\left( ([\Z=\zi]X=x )^{\u} \rightarrow [\Z=\zi] \U=\u \leq [\Y=\y] \U=\u \right)
\end{align}
This formula is read as: ``In order to $X=x$, it ought to be that $\Y=\y$." Semantically, in the actual world $\A$ where $\A(\U)=\u$, the expression $(\neg X=x)^{\u}$ means that $X$ does not occur in $\A$, indicating that $X=x$ can be a goal. The idea is straightforward: the goal is something that has not yet been realized in the real world. The formula $([\Y=\y]X=x)^{\u}$ means that executing the action $\Y=\y$ will lead to $X=x$ in $\A$, capturing the causal relationships between the action and the goal. The final conjunction asserts that, in the current world, no alternative action will result in a better outcome than executing $\Y=\y$.

Revisiting the example, we can derive that in the model, the formula $\IO(C=1 : A=1)$ is valid. Intuitively, intervening on $A=1$ leads to the state $\A_2$. Although this is not the best possible world in the entire ordering, it is the best world given the current state by doing $A=1$, and $A=1$ leads to $C=1$. Therefore, $A=1$ is instrumentally obligatory. However, $B=1$ is not obligatory, as intervening on $B=1$ leads to a state where $D=1$, which is worse than the actual world. Thus, $B=1$ is not an obligation. 

Building on this formalization, we can define permission under instrumental meaning accordingly, denoted as $\IP$, which states that ``in order to $X=x$, you can execute $Y=y$".

\begin{align}
\IP(X=x:\Y=\y)^{\u} := &\ (\neg X=x)^{\u} \wedge ([\Y=\y]X=x)^{\u} \nonumber \\
&\wedge \bigwedge\limits_{\y' \subseteq \mathcal{R}(\Y)} \left( ([\Z=\zi]X=x)^{\u} \rightarrow [\Y=\y'] \U=\u' \leq [\Y=\y] \U=\u' \right)
\end{align}

The distinction between $\IO$ and $\IP$ lies in the final conjunction. For instrumental obligation, it indicates that executing the action is better than any alternative. In contrast, for permission, it suggests that while executing $Z=z$ might also lead to $X=x$ and result in a better world, executing $Y=y$ is still ideal than to not doing so when trying to achieve $X=x$. Intuitively, this means that $Y=y$ is one possible method to reach $X=x$, but not the only one\footnote{This notion of permission differs from standard deontic logic, where permission is typically defined as the dual of obligation. Instead, it bears some resemblance to the concept in game models of deontic logic (\cite{RoyAnglbergerGratzl2012}), where obligation is considered the weakest form of permission.}.

\section{A logic for instrumental obligation}\label{CIO}

In this section, we give a Hilbert-style calculus for reasoning about the instrumental obligation, $\mathsf{CIO}$, and prove its soundness, completeness, and address its computational complexity. 

\subsection{Soundness and completeness}

The calculus $\mathsf{CIO}$ includes the following rules and axioms:

\begin{itemize}
    \item[(P)]Propositional  tautology. 
    \item[MP] From $\varphi_1$ and $\varphi_1 \rightarrow \varphi_2$ infer $\varphi_2$  
    \item[Nec] From $\varphi$ infer $[\overrightarrow{X}=\overrightarrow{x}]\varphi$ and $\varphi^{\u}$. 
    \item[$A_1$]  $ [\overrightarrow{X}{=}\overrightarrow{x}]Y{=}y \rightarrow \lnot [\overrightarrow{X}{=}\overrightarrow{x}]Y{=}y'$  for $y,y' \in \{0,1\}$ with $y \neq y'$ 
    \item[$A_2$] $ \bigvee_{y \in R(Y)} [\overrightarrow{X}{=}\overrightarrow{x}]Y{=}y$
    \item[$A_3$] $ ( [\overrightarrow{X}{=}\overrightarrow{x}]Y{=}y \land [\overrightarrow{X}{=}\overrightarrow{x}]Z{=}z ) \rightarrow [\overrightarrow{X}{=}\overrightarrow{x},\overrightarrow{Y}=\overrightarrow{y}]Z{=}z$  
    \item[$A_4$] $[\overrightarrow{X}{=}\overrightarrow{x},Y{=}y]Y{=}y$
    \item[$A_5$]  $ (X_0 \syndirparof X_1\wedge \cdots \wedge X_{k-1} \syndirparof X_k) \rightarrow \lnot (X_k \syndirparof X_0)$, $X_0,...,X_k$ are distinct variables in $\NV$\footnote{$Y \rightsquigarrow Z $ is an abbreviation of $ \bigvee_{\overrightarrow{X} \subseteq  \mathcal{V}\backslash\{Y,Z\}, y,y',z,z'\in\{0,1 \},y\neq y',z\neq z', Y\neq Z}([\overrightarrow{X}=\overrightarrow{x},Y=y]Z=z' \wedge [\overrightarrow{X}=\overrightarrow{x}, Y=y']Z=z) $ which indicates $Y$ has causal influence on $Z$.}
    \item[$A_\lnot$] $[\overrightarrow{X}{=}\overrightarrow{x}]\neg\varphi \leftrightarrow \lnot [\overrightarrow{X}{=}\overrightarrow{x}]\varphi$ 
    \item[$A_\land$ ] $[\overrightarrow{X}=\overrightarrow{x}]\varphi_1\wedge\varphi_2 \leftrightarrow [\overrightarrow{X}=\overrightarrow{x}]\varphi_1 \land [\overrightarrow{X}=\overrightarrow{x}]\varphi_2)$ 
    \item[$A_{[][]}$] $[\overrightarrow{X}=\overrightarrow{x}][\overrightarrow{Y}=\overrightarrow{y}]\varphi \leftrightarrow [\overrightarrow{X'}=\overrightarrow{x'},\overrightarrow{Y}=\overrightarrow{y}]\phi$, with $\overrightarrow{X'}=\overrightarrow{x'}$ the subassignment of $\overrightarrow{X}=\overrightarrow{x}$ for $\overrightarrow{X'}:=\overrightarrow{X}\backslash \overrightarrow{Y}$
    \item[Asym] $X=x \prec Y=y \rightarrow \neg (Y=y \prec X=x)$ 
    \item[Trans]  $X=x \prec Y=y \wedge Y=y \prec Z=z \rightarrow X=x \prec Z=z $ 
    \item[$G_\wedge$] $(\varphi \wedge \psi)^{\u} \leftrightarrow (\varphi^{\u} \wedge \psi^{\u})$ 
    \item[$G_\neg$] $\neg(\varphi)^{\u} \leftrightarrow (\neg\varphi)^{\u}$
    \item[$G_{\u}$] $(\varphi^{\u})^{\u'}\leftrightarrow \varphi^{\u}$
    \item[$G_{[]}$] $([\overrightarrow{X}=\overrightarrow{x}]\phi)^{\u})\leftrightarrow [\overrightarrow{X}=\overrightarrow{x}](\phi^{\u})$
    \item[Self]$(\U=\u)^{\u}$
    \item[Incl] $\U={\u} \wedge \varphi \rightarrow \phi^{\u}$
\end{itemize}

The first kind of the axioms, which includes $A_1$ to $A_5$, $A_\lnot$, $A_\land$ and $A_{[][]}$, is directly from the system $AX_{rec}$ in \cite{halpern2000axiomatizing}. They describe how the intervention operator works in the causal structure. $A_1$ to $A_4$ express the functionality of intervention. $A_5$ guarantees the causal influence is acyclic. $A_\lnot$, $A_\land$ and $A_{[][]}$ are reduction axioms for Boolean operators.  \\
The second kind of axioms includes (Asym) and (Trans). The two axioms characterize the priority ordering is a strict partial order. \\
For the third kind of axioms: ($G_\wedge$), ($G_\neg$), ($G_{\u}$), ($G_{[]}$) are reduction axioms for $\u$. (Self) and (Incl) express the essential property of $\u$: $\u$ is the value of all exogenous variables.

\begin{theorem}
    $\mathsf{CIO}$ is sound and complete w.r.t causal deontic models. 
\end{theorem}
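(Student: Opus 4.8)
The plan is to establish soundness by a routine induction on derivations and completeness by a Henkin-style construction that exploits the fact that, for a fixed finite signature $\mathcal{S}=(\mathcal{U},\mathcal{V},\mathcal{R})$, there are only finitely many causal deontic models. For soundness, the axioms $A_1$–$A_5$, $A_\lnot$, $A_\land$ and $A_{[][]}$ are validity-preserving because they already belong to $AX_{rec}$ of \cite{halpern2000axiomatizing} and the semantics of the intervention operator in a causal deontic model is, by definition, exactly that of the underlying recursive causal model; (Asym) and (Trans) hold because $\ll$, hence $\prec$, is a strict partial order by definition; and $(G_\wedge)$, $(G_\neg)$, $(G_{\u})$, $(G_{[]})$, (Self) and (Incl) hold because $\varphi^{\u}$ is interpreted as evaluation at the unique $\mathcal{F}$-compliant assignment whose exogenous part is $\u$, i.e.\ as a global ``satisfaction-at-a-nominal'' operator, for which these are precisely the standard interaction laws (commuting with Booleans, absorbing an outer nominal, commuting with interventions, and naming its own world). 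MP and Nec obviously preserve validity, and since $\IO(\cdot:\cdot)^{\u}$ and $\IP(\cdot:\cdot)^{\u}$ are introduced by abbreviation, no further checks are needed for them.

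For completeness I would first use the reduction axioms as rewrite rules, together with Nec and a replacement-of-provable-equivalents lemma, to show that every $\phi\in\mathcal{L}_D$ is provably equivalent to a Boolean combination of \emph{reduced atoms}: actual-world atoms $Y=y$, priority atoms $X=x\prec Y=y$, and named intervened atoms $\bigl([\overrightarrow{X}=\overrightarrow{x}]Y=y\bigr)^{\u}$. Concretely, $(G_\wedge)$ and $(G_\neg)$ push $\cdot^{\u}$ through Booleans, $(G_{\u})$ collapses nested nominals, $(G_{[]})$ commutes a nominal past an intervention, $A_\lnot$ and $A_\land$ push interventions through Booleans, and $A_{[][]}$ contracts iterated interventions; one checks that this rewriting terminates and lands in the claimed fragment, the case of an intervention or nominal scoping over a priority atom being handled by showing (from the axioms, since priority is global and intervention-invariant) that it is equivalent to the bare priority atom. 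Because $\mathcal{S}$ is finite, there are only finitely many reduced atoms.

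Next, given a consistent $\phi$, I would extend it to a maximal consistent set $\Gamma$ and read a model $\mathcal{M}_\Gamma=\langle\mathcal{S},\mathcal{F}_\Gamma,P_\Gamma,\mathcal{A}_\Gamma\rangle$ off $\Gamma$: the priority ordering $P_\Gamma$ is given by the priority atoms in $\Gamma$, a strict partial order by (Asym) and (Trans); each structural function $f^\Gamma_Y$ is read from the full-context formulas $[\overrightarrow{X}=\overrightarrow{x}]Y=y\in\Gamma$ with $\overrightarrow{X}=(\mathcal{U}\cup\mathcal{V})\setminus\{Y\}$, which by $A_1$ and $A_2$ determines a genuine total function and by $A_5$ an acyclic causal influence; and $\mathcal{A}_\Gamma$ is read from the atoms $Y=y\in\Gamma$, with $A_3$ and $A_4$ ensuring $\mathcal{A}_\Gamma\in W^{\mathcal{F}_\Gamma}$. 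Since $\mathcal{F}_\Gamma$ is acyclic, each $\u\in\mathcal{R}(\mathcal{U})$ names a unique $\mathcal{F}_\Gamma$-compliant assignment, and (Self), (Incl), $(G_{[]})$ and $(G_{\u})$ guarantee that the named intervened atoms in $\Gamma$ describe exactly the values obtained in $\mathcal{M}_\Gamma$ by the corresponding interventions evaluated at those named worlds. A Truth Lemma, $\mathcal{M}_\Gamma\models\psi$ iff $\psi\in\Gamma$, then follows: for reduced atoms it holds by construction, the Boolean cases are standard for maximal consistent sets, and the general case holds because $\psi$ is provably, hence (by soundness) semantically, equivalent to its reduced form. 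Therefore $\mathcal{M}_\Gamma\models\phi$ and $\phi$ is satisfiable.

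The main obstacle I anticipate is the bookkeeping that makes the read-off structure a \emph{legitimate} causal deontic model rather than a mere labelling: checking that each $f^\Gamma_Y$ is well-defined and total, that the induced influence is genuinely acyclic so that every exogenous setting $\u$ fixes a unique assignment, and — most delicately — that $\mathcal{A}_\Gamma$ lies in $W^{\mathcal{F}_\Gamma}$ and that the nominal-indexed evaluations cohere with interventions uniformly across all $\u$. This is exactly where the Halpern axioms $A_1$–$A_5$ and the nominal axioms (Self), (Incl), $(G_{[]})$, $(G_{\u})$ have to be applied with care, whereas the priority component is immediate from (Asym) and (Trans). A secondary delicate point is verifying that the reduction to reduced atoms is both terminating and exhaustive, in particular for subformulas in which an intervention or a nominal scopes over a priority atom.
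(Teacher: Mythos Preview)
Your approach is essentially the paper's: a Henkin-style canonical model read off from a maximal consistent set, followed by a Truth Lemma. The paper does the reduction-by-axioms inline during the Truth Lemma induction rather than as a separate preliminary normal-form step, but this is a cosmetic reordering, not a different idea.

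There is, however, one genuine technical error in your canonical-model read-off. You propose to define $f^\Gamma_Y$ from formulas $[\overrightarrow{X}=\overrightarrow{x}]Y=y\in\Gamma$ with $\overrightarrow{X}=(\mathcal{U}\cup\mathcal{V})\setminus\{Y\}$. But in the language $\mathcal{L}_D$, the intervention operator $[\overrightarrow{V}=\overrightarrow{v}]$ is only well-formed when $\overrightarrow{V}\subseteq\mathcal{V}$; one cannot intervene on exogenous variables. So the formulas you need are simply not in the language, and your definition of $f^\Gamma_Y$ is ill-typed. The paper's fix is exactly what the nominal operator is there for: it reads $f^\Gamma_V(\overrightarrow{U}=\overrightarrow{u},\overrightarrow{X}=\overrightarrow{x})=v$ from $([\overrightarrow{X}=\overrightarrow{x}]V=v)^{\overrightarrow{u}}\in\Gamma$, where $\overrightarrow{X}$ ranges only over $\mathcal{V}\setminus\{V\}$ and the exogenous context is supplied by the nominal superscript. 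With that correction your construction goes through, and (Self), (Incl), $(G_{[]})$ then do the coherence work you anticipated. The ``secondary delicate point'' you flag about priority atoms under $[\cdot]$ or $(\cdot)^{\overrightarrow{u}}$ is real and is not explicitly handled by the listed axioms; the paper's Truth Lemma proof is silent on it as well, so this is a shared lacuna rather than a defect of your plan specifically.
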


The soundness is easy to be verified. For completeness, it is sufficient to show that for any maximal $\mathsf{CIO}$-consistent set, there is a causal-denotic model modelling it. 

\begin{definition}[Canonical model]
    Given a maximal $\mathsf{CIO}$-consistent set $\Gamma$, the canonical model $\M^\Gamma= (\Sig, \F^\Gamma, P^\Gamma)$ is defined as follow: 
    \begin{itemize}
        \item For each variable $V \in \NV$, the structural function $\F^\Gamma= \{f^\Gamma_V \}$ is defined in such a way: 
        $$
        f^\Gamma_V (\U=\u,\X=\x)=v ~\text{iff}~ ([\X=\x]V=v)^{\u}\in \Gamma
        $$ where $\U$ are all exogenous variables and $\X$ are all endogenous variables in $\NV \backslash \{ V \}$.  
    \item $P^\Gamma=(\Phi^\Gamma, <^\Gamma)$ where $\Phi^\Gamma=\mathsf{At}$ and $<^\Gamma$ is defined as: 
    $$
         X=x <^\Gamma Y=y ~\text{iff}~ X=x\prec Y=y \in \Gamma
    $$
    \item  $\A^{\U=\u}$ is defined as $\A^{\U=\u }(X)=x$ iff $(X=x)^{\u}\in \Gamma$ and $W^\Gamma = \{\A^{\U=\u }: \u \in R(\U) \}$.
    \end{itemize}
\end{definition}
Now we need to verify that the canonical model that we construct is well-defined: 
$A_1, A_2,A_5$ and (Incl) guarantees that there is a unique $v$ such that $([\X=\x]V=v)^{\u} \in \Gamma$. (Asym) and (Trans) guarantee that $<^\Gamma$ is a partial order. $A_1,A_2$ and (Gob) guarantee that each $\A^{\U=\u}$ is well-defined. Since $\Gamma$ is maximal consistent set, there is an atom $\U=\u\in \Gamma$. Then by (Incl), there is an $\A^{\U=\u}\in W^{\Gamma}  $ such that $\A^{\U=\u }(X)=x$ iff $(X=x)\in \Gamma$. Let $\A^\Gamma$ denote it.

\begin{lemma}[Truth Lemma]\label{lem:truth}
    For any $\phi \in \mathcal{L}(\Sig)$, $\phi \in \Gamma$ iff $\M^\Gamma, \A^\Gamma \models \phi$. 
\end{lemma}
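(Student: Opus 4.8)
The plan is to prove the Truth Lemma by induction on the structure of $\phi$, following the standard pattern for modal canonical-model arguments but with care devoted to the two novel operators, the nominal-like $\phi^{\u}$ and the intervention $[\X=\x]\phi$, whose interaction with the maximal consistent set $\Gamma$ is mediated by the definition of $\F^\Gamma$ and $W^\Gamma$. First I would record the base case: for an atom $X=x$, the claim $X=x \in \Gamma$ iff $\M^\Gamma,\A^\Gamma \models X=x$ is essentially the definition of $\A^\Gamma$ together with (Incl) and (Self) — $\A^\Gamma$ is the assignment $\A^{\U=\u}$ for the unique $\u$ with $\U=\u \in \Gamma$, and $(X=x)^{\u} \in \Gamma$ iff $X=x \in \Gamma$ by (Incl) and $G_{\u}$. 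The Boolean cases $\lnot\phi$ and $\phi_1 \land \phi_2$ are routine from maximal consistency.

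For the priority atom $X=x \prec Y=y$, the equivalence is immediate from the definition of $<^\Gamma$ and the truth condition for $\prec$, given that (Asym) and (Trans) have already been used to check $<^\Gamma$ is a strict partial order (so the model is a legitimate causal deontic model). The heart of the argument is the intervention clause: I would show by a sub-induction, for every sequence $\X=\x$ and every formula $\psi$, that $([\X=\x]\psi)^{\u} \in \Gamma$ iff $\M^\Gamma, \A^{\U=\u} \models [\X=\x]\psi$, i.e. iff $\M^\Gamma_{\X=\x}, (\A^{\U=\u})^{\F^\Gamma}_{\X=\x} \models \psi$. The crucial link is that the unique solution of $\F^\Gamma_{\X=\x}$ over exogenous values $\u$ assigns to each endogenous $V$ the value $v$ with $([\X=\x]V=v)^{\u}\in\Gamma$; this is where $A_1$–$A_5$ do the work — $A_1, A_2$ give functionality, $A_5$ (acyclicity) guarantees uniqueness of the solution, $A_4$ handles variables that were themselves intervened on, and $A_3$ together with $A_{[][]}$ lets one compute the post-intervention values of non-intervened variables consistently (so that the "surgery" described by $\F^\Gamma_{\X=\x}$ matches the syntactic description in $\Gamma$). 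With that established, $A_\lnot$, $A_\land$, $A_{[][]}$ and the reduction axioms $G_\neg, G_\wedge, G_{[]}$ push $[\X=\x]$ and $(\cdot)^{\u}$ inward past Booleans and nested interventions, reducing every formula in the scope of these operators to a Boolean combination of atoms, for which the base case applies.

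Finally, for $\phi^{\u'}$: by $G_{\u}$ we have $(\phi^{\u'})^{\u} \leftrightarrow \phi^{\u'}$, so membership of $\phi^{\u'}$ in $\Gamma$ is independent of the "current" world, matching the global truth condition $\M_D \models \phi^{\u'}$ iff $\A' \models \phi$ for the assignment $\A'$ with $\A'(\U)=\u'$; I would verify $\M^\Gamma, \A^{\U=\u'} \models \phi$ iff $\phi^{\u'} \in \Gamma$ by invoking the (already proved) statement of the Truth Lemma relative to the world $\A^{\U=\u'}$, which the inductive hypothesis supplies once the lemma is stated (as it should be) in the stronger parametrized form "$\phi \in \Gamma$ iff $\M^\Gamma, \A^{\U=\u}\models \phi$ for every $\u$," rather than only for $\A^\Gamma$. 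I expect the main obstacle to be precisely this bookkeeping: one must prove the parametrized version simultaneously for all worlds $\A^{\U=\u}$, and within the intervention case one must argue that the canonical structural functions genuinely admit the claimed unique solution and that this solution is read off correctly from $\Gamma$ — i.e. that the purely syntactic facts recorded in $\Gamma$ (via $A_1$–$A_5$) reconstruct the semantic intervention operation of Definition~\ref{Def: classical intervention}. Everything else is the usual reduction-axiom shuffling.
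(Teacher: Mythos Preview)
Your proposal is correct and follows essentially the same route as the paper: induction on $\phi$, with the atom and $\prec$ cases immediate from the canonical definitions, the Boolean cases routine, the intervention case reduced via $A_\lnot,A_\land,A_{[][]}$ (and $A_1$--$A_5$) to atomic $[\X=\x]Y=y$, and the $(\cdot)^{\u'}$ case handled through the $G$-axioms. The only organizational difference is that where you propose proving a parametrized statement for all $\A^{\U=\u}$ simultaneously, the paper instead runs a separate nested induction on $\psi$ inside the $\psi^{\u'}$ case; these are equivalent repackagings of the same argument.
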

\begin{proof}
    Induction on $\phi$. The based case is trivial. The case when $\phi$ is $X=x \prec Y=y$ also directly from the definition of $<^\Gamma$. If $\phi$ is $[\X=\x]\phi$. By $A_{\neg}$, $A_{\wedge},A_{[][]}$, it is sufficient to show that $[\X=\x]Y=y \in \Gamma$ iff $\M^\Gamma, \A^\Gamma \models [\X=\x]Y=y$. Suppose that $[\X=\x]Y=y \in \Gamma$. By (Incl), we have  $([\X=\x]Y=y)^{\u}   \in \Gamma$. Then by the definition of $f_Y^\Gamma$, we have $f^\Gamma_Y(\U=\u,\X=\x)=y$ which implies $\M^\Gamma, \A^\Gamma \models [\X=\x]Y=y$. One the other hand, if $[\X=\x]Y=y \notin \Gamma$, then by $A_1$ and $A_2$, there is $y'\neq y$ such that $[\X=\x]Y=y' \in \Gamma$. Then by the same argument, we have $f^\Gamma_Y(\U=\u,\X=\x)=y'$ which implies $\M^\Gamma, \A^\Gamma \not\models [\X=\x]Y=y$.  \\
    When $\phi$ is $\psi^{\u'}$: the proof goes by induction on $\psi$. The based case is straightforward from the definition of $\A^{\U=\u'}$. The rest cases can be obtained directly by $G_{\neg}$, $G_{\wedge},G_{\u}$ and $G_{[]}$. 
\end{proof}

By Lemma \ref{lem:truth}, the completeness is obtained:
\begin{theorem}[Completeness]
      $\mathsf{CIO}$ is complete w.r.t causal deontic models. 
\end{theorem}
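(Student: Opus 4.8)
The plan is to run the standard Henkin-style canonical model argument, for which almost all of the scaffolding — the canonical model $\M^\Gamma$, the verification that it is well-defined, and the Truth Lemma — has already been assembled above; completeness is then a matter of putting the pieces together. First I would reduce the claim to: \emph{every $\mathsf{CIO}$-consistent formula of $\mathcal{L}_D$ is satisfiable in a causal deontic model}. Given such a $\phi$, apply Lindenbaum's Lemma (using (P), MP, and closure under propositional reasoning) to extend $\{\phi\}$ to a maximal $\mathsf{CIO}$-consistent set $\Gamma$, and form $\M^\Gamma=(\Sig,\F^\Gamma,P^\Gamma)$ together with the actual assignment $\A^\Gamma=\A^{\U=\u}$ for the unique $\U=\u\in\Gamma$.

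Second, I would confirm that $\M^\Gamma$ genuinely is a causal deontic model — this is the part requiring real attention. One checks: (i) each $f^\Gamma_V$ is a total, single-valued function, since $A_2$ forces some value and $A_1$ (with (Incl) handling the $\cdot^{\u}$ layer) forbids two, so $\F^\Gamma$ is a legitimate family of structural equations; (ii) $\F^\Gamma$ is acyclic, which is exactly what $A_5$ guarantees through the definition of $\rightsquigarrow$, hence the model is recursive and every setting of the exogenous variables determines a unique compliant assignment; (iii) $<^\Gamma$ is a strict partial order on $\mathsf{At}$ by (Asym) and (Trans), so $P^\Gamma$ is a bona fide priority ordering; and (iv) each $\A^{\U=\u}$ is a well-defined assignment lying in $W^{\F^\Gamma}$ — one propagates $(V{=}v)^{\u}\in\Gamma$ to $([\X{=}\x]V{=}v)^{\u}\in\Gamma$ along the recursive order via $A_3$ and $A_4$, and uses the $G$-axioms for the consistency of the value assignment — so that $W^\Gamma=W^{\F^\Gamma}$. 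Note that the ideal ordering $\leq_{P^\Gamma}$ is then fixed by $P^\Gamma$ and $\M^\Gamma$ exactly as in Def.~\ref{def:ideal ordering fromP}; crucially, the comparison formulas $\U{=}\u\leq\U{=}\u'$ of (1) and their intervention variants (2) are mere abbreviations in $\mathcal{L}_D$ whose definition was designed to mirror Def.~\ref{def:ideal ordering fromP}, so no clause beyond the plain $\mathcal{L}_D$-semantics is needed for them.

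Third, I would invoke the Truth Lemma (Lemma~\ref{lem:truth}), which is proved by structural induction over all of $\mathcal{L}_D$: the clauses for $X{=}x\prec Y{=}y$ and for $\psi^{\u}$ are already treated in its proof, and the derived operators $\IO$ and $\IP$ of (3)--(4), being definitional abbreviations built from intervention formulas, priority formulas, $\cdot^{\u}$, and Booleans, introduce no new case. Hence $\phi\in\Gamma$ yields $\M^\Gamma,\A^\Gamma\models\phi$, so $\phi$ is satisfiable. Contraposing: if $\phi$ is valid on all causal deontic models then $\neg\phi$ is inconsistent, i.e. $\vdash_{\mathsf{CIO}}\phi$, which is the desired completeness.

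I expect the main obstacle to be step two, and within it item (iv): showing that the purely syntactic recipe for $\F^\Gamma$ produces structural functions whose unique recursive solution from the exogenous values $\u$ is precisely the assignment $\A^{\U=\u}$ read off from $\Gamma$. The delicacy is that $f^\Gamma_V$ is specified via interventions on \emph{all} other endogenous variables, whereas compliance requires evaluating $f^\Gamma_V$ at the \emph{actual} values of those variables; reconciling the two is exactly where $A_3$ (composition of interventions) and $A_4$ (interventions fix their targets) do the work, with acyclicity from $A_5$ making the induction along the causal order go through. Everything else is routine bookkeeping with the reduction axioms $A_\lnot$, $A_\land$, $A_{[][]}$ and $G_\neg$, $G_\wedge$, $G_{\u}$, $G_{[]}$.
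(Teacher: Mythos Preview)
Your proposal is correct and follows exactly the paper's approach: build the canonical model $\M^\Gamma$ from a maximal consistent set, verify it is a well-defined causal deontic model, and then invoke the Truth Lemma (Lemma~\ref{lem:truth}) to conclude. The paper's own proof is the one-line ``By Lemma~\ref{lem:truth}, the completeness is obtained''; your write-up is a more explicit unpacking of that same route, and in particular your flagging of item~(iv) --- that $\A^{\U=\u}$ actually complies with $\F^\Gamma$, handled via $A_3$, $A_4$, $A_5$ --- fills in a detail the paper only gestures at.
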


\subsection{Complexity}
In this section, we explore the complexity of the logic, let $|\phi|$ denote the length of $\phi$ which is viewed as a string of symbols:
\begin{lemma}\label{lem:reduce}
    Given a signature $\Sig{=}(\XV,\NV,\R)$.  Let $\langle \phi \rangle{=} \{X \in \XV \cup \NV :~ X~\text{appears in}~ \phi   \}$. Let $\Sig_\phi=(\{U^*\}, \NV_\phi, \R_\phi)$, where $\NV_\phi= \langle \phi \rangle $; $U^*$ is a fresh variable which does not occur in $\XV$ or $\NV$, $\R_\phi(X)=\R(X)$ for all $X\in \NV_\phi$ and $\R_\phi(U^*)$ consists of all tuples in $\times_{U\in \XV}\R(U)$, that is, the value of $U^*$ represents the distribution of values of $\XV$.

    A $\mathcal{L}(\Sig)$ formula $\phi$ is satisfiable in a model based on a signature $\Sig$ iff it is satisfiable in a model based on $\Sig_\phi$. 
\end{lemma}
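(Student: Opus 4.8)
The plan is to prove both directions by constructing an explicit model over the target signature and then showing, by induction on the subformulas of $\phi$, that satisfaction is preserved; throughout I would use that by the syntax of $\mathcal{L}(\Sig)$ every variable intervened on inside $\phi$ belongs to $\langle\phi\rangle$, so all interventions that matter take place among $\NV_\phi$. Start with the direction from $\Sig$ to $\Sig_\phi$. Fix $\M=\tuple{\Sig,\F,\A}$ with $\M\models\phi$ and define $\M'=\tuple{\Sig_\phi,\F',\A'}$ by putting $\A'(U^*):=\tuple{\A(U)}_{U\in\XV}$, the tuple of actual exogenous values, and, for each $V\in\NV_\phi$, letting the reduced equation $f'_V$ be obtained by ``solving inside $\M$ and reading off $V$'': given a value $\u$ of $U^*$ (which codes an exogenous assignment of $\Sig$) and values $\overrightarrow{w}$ for the remaining variables of $\NV_\phi$, set $f'_V(\u,\overrightarrow{w})$ to be the value of $V$ in the unique $\F$-solution under exogenous setting $\u$ after the intervention forcing $\NV_\phi\setminus\{V\}=\overrightarrow{w}$. (A member of $\langle\phi\rangle$ that is exogenous in $\Sig$ is handled separately, with $f'$ merely copying its coordinate of $U^*$, so that atoms about it stay intervention-insensitive as in $\M$.)

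The main obstacle, which I would tackle first, is showing that $\F'$ is a genuine acyclic causal model whose interventional solutions coincide with those of $\M$ on $\NV_\phi$. For acyclicity: unwinding the definition, if $f'_V$ genuinely depends on $W$ then $W$ must be an $\F$-ancestor of $V$, so the dependency graph of $\F'$ is contained in the $\F$-ancestor relation restricted to $\NV_\phi$, which is acyclic because $\F$ is; hence every intervened $\F'$-system has a unique solution. For agreement: for every $\X\subseteq\NV_\phi$, all values $\x$, and every exogenous $\u$, I would show that the restriction to $\NV_\phi$ of the $\M$-solution $\A^\F_{\X=\x}$ already satisfies the $\X=\x$-intervened $\F'$-equations with $U^*=\u$, so that by uniqueness the two solutions agree. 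This reduces to the standard idempotence fact that re-forcing variables to the values a solution already assigns them leaves the solution unchanged: expanding $f'_V$ at the $\A^\F_{\X=\x}$-values of $\NV_\phi\setminus\{V\}$ amounts to ``solve $\M$ forcing $\NV_\phi\setminus\{V\}$ to those same values'', and since $\X\subseteq\NV_\phi\setminus\{V\}$ whenever $V\notin\X$, that solution is again $\A^\F_{\X=\x}$.

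Granting the agreement claim, truth preservation is a routine induction on subformulas $\psi$ of $\phi$: atoms transfer because the un-intervened solutions agree on $\NV_\phi\supseteq\langle\psi\rangle$; a formula $[\X=\x]\psi$ transfers by the agreement claim applied to $\X\subseteq\NV_\phi$; Booleans are immediate. This gives $\M'\models\phi$. For the converse, given $\M'=\tuple{\Sig_\phi,\F',\A'}\models\phi$, I would invert the encoding: set $\A(U)$ equal to the $U$-coordinate of $\A'(U^*)$ for each $U\in\XV$, take $f_V:=f'_V$ for the endogenous members $V$ of $\NV_\phi$, and give every $V\in\NV\setminus\NV_\phi$ an arbitrary constant equation. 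The padded equations create no cycles and nothing in $\NV_\phi$ can depend on a padded variable, so $\F$ is acyclic; running the same induction backwards — again using that no intervention occurring in $\phi$ touches $\NV\setminus\NV_\phi$ — yields $\M,\A\models\phi$. (For the $\mathcal{L}_D$ analogue one additionally transports the priority ordering by letting $P'$ agree with $P$ on atoms from $\langle\phi\rangle$ and uses that values of $U^*$ biject with $\R(\XV)$ to handle the $\phi^{\u}$ operator, whence the defined modalities $\IO,\IP$ are preserved as well.)
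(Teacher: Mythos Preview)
Your proposal is correct and follows the same overall strategy as the paper: restrict to the variables occurring in $\phi$, cook up structural equations over $\Sig_\phi$ that internally solve for the hidden variables, verify acyclicity, and then show by induction on subformulas that interventional solutions (hence truth values) agree; the converse pads with constant equations, exactly as the paper does.

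The one genuine difference worth flagging is in how the reduced equations are defined. The paper fixes a topological order $\vartriangleleft$ on $\NV$ and defines $f'_X$ by recursion along $\vartriangleleft$, explicitly computing the values of the hidden predecessors of $X$ from the already-defined $f'_Y$ and then plugging into the original $f_X$. You instead give a global, non-inductive definition: $f'_V(\u,\overrightarrow{w})$ is simply the value of $V$ in the $\M$-solution after intervening on $\NV_\phi\setminus\{V\}=\overrightarrow{w}$ under exogenous setting $\u$. The two definitions yield the same functions, but yours makes the agreement claim almost tautological via the composition/idempotence property (re-intervening to values a solution already assigns is a no-op), whereas the paper defers that part to an induction in the style of Halpern's original argument. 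Your acyclicity argument---dependencies of $f'_V$ live inside the $\F$-ancestor relation restricted to $\NV_\phi$---is also cleaner than choosing a witness order up front. Both routes handle the $\prec$ and $\phi^{\u}$ clauses in the obvious way (transport the priority order on atoms, identify values of $U^*$ with tuples in $\times_{U\in\XV}\R(U)$), so nothing is lost for the $\mathcal{L}_D$ extension.
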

\begin{proof}

From left to right:

We construct a finite model $M {=} \tuple{\Sig_\phi, \F_\phi,P_\phi} $ based on $\Sig_\phi$. First, we define $\F_\phi$: since $\phi$ is satisfiable in $M$, there exists an ordering $\vartriangleleft$ among the endogenous variables in $\NV$ such that if $X \vartriangleleft Y$, then the value of $F_X$ is independent of the value of $Y$. Let $Pre(X){=}\{ Y \in \XV ~|~ Y \vartriangleleft X \}$. For convenience, we take the values of variables in $\XV \cup Pre(X)$ as parameters of $f_X$ (Since $X$ is independent of the variables in $ \NV\backslash Pre(X)$, the values of those variables has no effect on $f_X$).

Then for any endogenous variable $X \in \XV $, we define $f'_X$ as follows: Induction on $\vartriangleleft$. Let $f'_X(\overrightarrow{u},\overrightarrow{a}){=}f_X(\overrightarrow{u},\overrightarrow{b})$ where $\overrightarrow{a}$ is the values of variables  $\NV_\phi \backslash \{X\}$ and $\overrightarrow{b}$ is the values of variables in $Pre(X)$, $\overrightarrow{u}$ is the value of variables in $\XV$. If $X$ is $\vartriangleleft$-minimal, then $f'_X(\overrightarrow{u},\overrightarrow{a}){=}f_X(\overrightarrow{u})$.
Inductive: let $f'_X(\overrightarrow{u},\overrightarrow{a}){=}f_X(\overrightarrow{u},\overrightarrow{b})$ where $\overrightarrow{a}$ is the values of variables  $\NV_\phi \backslash \{X\}$ and $\overrightarrow{b}$ is the values of variables in $Pre(X)$. For any $Y \in Pre(X)$, if $Y \in Pre(X) \cap \NV_\phi $, then the value of $Y$ in $\overrightarrow{b}$ is the value of $Y$ in $\overrightarrow{a}$. If $Y \in Pre(X) \backslash \NV_\phi $
, then the value of $Y$ in $\overrightarrow{b}$ is $f'_Y(\overrightarrow{u},\overrightarrow{a})$(By I.H, $f'_Y(\overrightarrow{u},\overrightarrow{a})$ has been defined).

Let $\F_\phi{=}\{f^\phi_X : X \in \langle \phi 
\rangle \}$, we define $P_\phi=\tuple{\Phi_\phi, \ll_\phi}$ as follow: $Y=y \ll_\phi X=x$ iff $Y=y\prec X=x $ appears in $\phi$, $\Phi_\phi$ is the collection of the subformulas of the formulas which is form of  $Y=y \ll_\phi X=x$ appearing in $\phi$. Now we the causal model $M$ has been defined.

Then we induction on $\phi$. The cases without $\prec$ and $\phi^{\u}$ are similar in \cite{halpern2000axiomatizing}. If $\phi$ is $Y=y\prec X=x$, then it follows from the definition of $\ll_\phi$ directly. If $\phi$ is $\psi^{\u}$. Since $\psi^{\u}$ is satisfiable based on $\Sig$, then there is an assignment $\A$ in $M$ such that $\A(U^*)=\u$ and $M,\A \models \psi$ by I.H. Hence, $\psi^{\u}$ is satisfiable in $M$. 

From right to left: 

Given a model $M=\tuple{\Sig_\phi, \F, P}$, we can define a model $M'=\tuple{\Sig, \F' , P'}$ as follows: let $f'_X$ be a constant for $X\in \XV \backslash \XV_\phi$; if $X \in \XV_\phi$, define $f'_X(\u,\zi,\y)=f_X(\u,\zi)$, where $\u \in \R(U^*)$, $\zi$ is the values of the variables in $\XV_\phi \backslash \{ X \}$ and $\y$ is the values of $\XV \backslash \XV_{\phi}$; if $\u \notin \R(U^*)$, let $f'_X(\u,\zi,\y)$ be an arbitrary constant. Let $P=P_\phi$. Then it is easy to see that if  $\phi$ is satisfiable in $M$, it is also satisfiable in $M'$ by induction on $\phi$.  

\end{proof}

\begin{theorem}
    Given a formula $\phi$ and a signature $\Sig$, the problem of deciding if $\phi$ is satisfiable with respect to the class of causal deontic models is NP-complete.
\end{theorem}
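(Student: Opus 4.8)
The plan is to establish NP-membership and NP-hardness separately. For hardness, I would embed propositional satisfiability: given a propositional formula over variables $p_1,\dots,p_n$, introduce endogenous variables $P_1,\dots,P_n$ with range $\{0,1\}$ whose structural functions are constants (or driven by distinct exogenous variables), and translate the propositional formula into an $\mathcal{L}_D$-formula using atoms $P_i{=}1$ in place of $p_i$ and Boolean connectives directly. Since every assignment to the $P_i$ is realizable, the translated formula is satisfiable in a causal deontic model iff the original propositional formula is satisfiable, giving NP-hardness.

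For membership, the key tool is Lemma~\ref{lem:reduce}: any satisfiable $\phi$ is satisfiable in a model over the reduced signature $\Sig_\phi$, which has a single exogenous variable $U^*$ and endogenous variables only among $\langle\phi\rangle$, so the number of variables is linear in $|\phi|$. The obstacle is that $U^*$ has exponentially many values (it codes a tuple over all original exogenous variables), and likewise the structural functions $f_X$ are tables of exponential size in the number of variables — so a naive "guess the whole model" certificate is too large. The standard fix (following \cite{halpern2000axiomatizing}) is to observe that $\phi$ mentions only finitely many intervention contexts $\X{=}\x$ and finitely many nominals $\u$ (and finitely many priority atoms), so only polynomially many "cells" of the model are actually consulted when evaluating $\phi$ and the derived modalities $\IO$, $\IP$ it abbreviates. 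Concretely, I would expand $\phi$ into the primitive language (unfolding $\IO$, $\IP$, the $\leq$-abbreviations) and note this expansion is polynomial in $|\phi|$ because the big conjunctions/disjunctions in Formulas (1)–(3) range over the variables in $\langle\phi\rangle$ and their ranges, all of size polynomial in $|\phi|$; then I collect the finite set $N$ of pairs $(\u,\X{=}\x)$ occurring in the expansion, together with the finitely many nominals, and guess only: (a) the relevant finite fragment of each $f_X$ — its value on the contexts in $N$ and on the "chained" contexts needed to close off $A_3$-style consistency — (b) the finite restriction of $\prec$ to atoms appearing in $\phi$, and (c) the induced truth values at the relevant assignments.

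The verification step then checks in polynomial time that the guessed fragment is consistent with the axioms $A_1$–$A_5$ (functionality and acyclicity of the consulted cells), that $\prec$ restricted to the relevant atoms is a strict partial order (Asym, Trans), that the guessed assignment values cohere with the guessed structural-function cells via the recursive evaluation of at most polynomially many interventions, and finally that $\phi$ (in expanded form) evaluates to true under these data. The main obstacle — and the point needing the most care — is arguing that the polynomially many consulted cells suffice, i.e.\ that the recursive solving of each intervention $[\X{=}\x]$ in the reduced model terminates after reading only polynomially many function-table entries; this follows because $\NV_\phi$ has polynomially many variables, each intervention fixes a subset of them, and solving the residual acyclic system reads one entry per remaining endogenous variable, so each of the polynomially many interventions appearing in the expansion consults at most $|\langle\phi\rangle|$ cells. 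Combining the guess (polynomial size) with the verification (polynomial time) yields NP-membership, and with the hardness reduction this gives NP-completeness.
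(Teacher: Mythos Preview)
Your overall approach matches the paper's: reduce to $\Sig_\phi$ via Lemma~\ref{lem:reduce}, then nondeterministically guess only the data actually consulted when evaluating $\phi$ --- for each intervention context $[\X{=}\x]$ occurring in $\phi$ (and each relevant nominal $\u$) a solution vector for the endogenous variables, a compatible acyclic ordering $\vartriangleleft$, and the finite $\prec$-fragment appearing in $\phi$ --- and verify in polynomial time. Your treatment of $\prec$ and of the nominals $\u$ is in fact more explicit than the paper's somewhat terse proof, which simply says ``$P$ can be described directly from the information of $\phi$'' and folds the nominals into the phrase ``given context $\u$''. So on the core argument you are aligned with the paper and your extra detail is fine.

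There is, however, one genuine error in your write-up. Your detour about ``expanding $\phi$ into the primitive language (unfolding $\IO$, $\IP$, the $\leq$-abbreviations)'' is both unnecessary and, as stated, wrong. It is unnecessary because the theorem is about $\mathcal{L}_D$, and $\IO,\IP$ are \emph{derived} notions, not primitives of $\mathcal{L}_D$; the input $\phi$ is already in primitive form. It is wrong because your claim that the expansion of Formulas~(1)--(3) is polynomial fails for Formula~(3): the big conjunction there ranges over all subsets $\Z\subseteq\NV$ and all value tuples $\zi\in\R(\Z)$, which is exponential in $|\NV|$, not polynomial. So if you actually intended to extend NP-membership to a language with $\IO$ as a primitive symbol, that step would collapse. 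For the theorem as stated this is moot --- just delete the unfolding discussion --- but you should not present the claim about polynomial expansion of (3) as if it were true.
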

\begin{proof}
The NP-lower bound can be verified by encoding the satisfiability (SAT) problem for propositional logic into the SAT problem for $\mathcal{L}(\Sig)$. \cite{halpern2000axiomatizing} proved this results w.r.t to basic causal logic. Since our logic is an extension of the basic causal logic, the proof in \cite{halpern2000axiomatizing} also works for our logic. 

For the NP upper bound, we verify whether $\phi$ is satisfiable by guessing a causal deontic model. What we need to construct are the set of structure functions $\F$ and the priority ordering $P$. By Lemma \ref{lem:reduce}, it suffices to check whether $\phi$ is satisfiable in a $\Sig_\phi$-model. Then the numbers of the variables that we need to consider are limited to $|\phi|$. Similar to Lemma \ref{lem:reduce}, $P$ can be described directly from the information of $\phi$.

Let $R$ be the set of all $\X=\x$ where $[\X=\x]$ appears in $\phi$.
We say that two models $M$ and $M'$ agree on $R$ if for each $\X=\x \in R$, given context $\u$, $M_{\X=\x}$ and $M'_{\X=\x}$ have the same unique solution. It follows that if $M$ and $M'$ agree on $R$, then both $M_{\X=\x}$ and $M'_{\X=\x}$ satisfy $\phi$ in context $\u$ or neither do.
Hence, all we need to know about models is how it deals with the assignments in $R$. 

For each $\X=\x \in R$, we guess a vector $\overrightarrow{v}(\X=\x)$ of values for the endogenous variables. Given this guess, it is easy to check whether $\phi$ is satisfied in a model where these guesses are the solutions to the equations. 

The last thing we need to do is showing that there exists a causal deontic model in $\Sig_\phi$ such that the relevant equations have these solutions. We first guess an ordering $\vartriangleleft$ on the variables ( Similar to Lemma \ref{lem:reduce}). Then verify whether the guessed $\overrightarrow{v}(\X=\x)$ are compatible with $\vartriangleleft$. If the solutions are compatible with $\vartriangleleft$, then we can define the structure functions $f_X$ for $X \in \XV$ such that all the equations hold and $f_X$ is independent of the value of $Y$ if $X \vartriangleleft Y$. Hence, the SAT problem for $\mathcal{L} (\Sig)$ is in NP. As a consequence, it is NP-complete. 
\end{proof}

\section{Conclusion}\label{conclu}

In this paper, we developed a logic to formalize the concept of instrumental obligation. We began by constructing a causal deontic model that extends a causal model with a priority structure. Causal relationships are used to capture the instrumental meaning of obligation, while the priority structure derives an ideal ordering, which grounds the deontic meanong of obligation. 

In this model, instrumental obligation is defined as a derived notion, which is represented through intervention. It means that an action is obligatory if, after performing the action, the goal is achieved in a best way. We also defined permission in this framework that an action is permitted if it is a good to achieve the goal. 

Finally, we introduced a logical system for the model, proving its soundness and completeness, and showing that the logic is NP-complete.

In future work, we plan to formalize the notion of multiple goals in this model and consider how to model desire in the logic.

%
% ---- Bibliography ----
%
% BibTeX users should specify bibliography style 'splncs04'.
% References will then be sorted and formatted in the correct style.
%
 \bibliographystyle{splncs04}
 \bibliography{awpl}

\end{document}